\newcommand{\Tr}{{\rm Tr}}
\newcommand{\gf}{ {{\mathbb F}} }
\newtheorem{lemma}{Lemma}[section]
\newtheorem{definition}{Definition}[section]
\newtheorem{example}{Example}[section]
\newtheorem{theorem}{Theorem}[section]
\newtheorem{corollary}{Corollary}[section]
\newtheorem{question}{Question}[section]
\newtheorem{remark}[theorem]{Remark}
\begin{document}

\begin{frontmatter}



\title{Permutation polynomials over finite fields by the local criterion}


\author[wuaddress]{Danyao Wu\corref{mycorrespondingauthor}}
\cortext[mycorrespondingauthor]{Corresponding author}
\ead{wudanyao111@163.com}

\author[yuanaddress]{Pingzhi Yuan}
\ead{yuanpz@scnu.edu.cn}

\address[wuaddress]{School of Computer Science and Technology,
	Dongguan University of Technology, Dongguan 523808, China}
\address[yuanaddress]{School of Mathematics, South China Normal University, Guangzhou 510631, China}
%

%

\begin{abstract}
In this paper, we  further  investigate the local criterion and present a class of permutation polynomials and their compositional inverses  over $
\gf_{q^2}$. Additionally, we demonstrate that  linearized  polynomial over $\gf_{q^n}$ is a local permutation polynomial with respect to all linear transformations from $\gf_{q^n}$ to $\gf_q  ,$  and that  every permutation polynomial is a local permutation polynomial with respect to certain mappings.    
\end{abstract}

\begin{keyword}

Finite field \sep Polynomial \sep Permutation polynomial \sep Local criterion
\MSC 11C08 \sep 12E10
\end{keyword}

\end{frontmatter}

\section{Introduction}
\label{}
Let  $\gf_q$ be the finite field with $q$ elements and $\gf_{q}^*$ denote the
multiplicative group with the nonzero element in $\gf_q$, where $q$ is a prime power. Let $\gf_q[x]$
be the ring of polynomials in a single indeterminate $x$ over $\gf_q$. A polynomial
$f \in\gf_q[x]$ is called a {\em permutation polynomial}  of $\gf_q$ if its
associated polynomial mapping $f: c\mapsto f(c)$ from $\gf_q$ to itself is  bijective. The unique polynomial denoted by $f^{-1}(x)$ over $\gf_q$
such that $f(f^{-1}(x))\equiv f^{-1}(f(x)) \equiv x \pmod{x^q-x}$ is called the compositional inverse of $f(x).$ Furthermore,  $f(x)$ is called  an involution when $f^{-1}(x)=f(x).$

The study of permutation polynomials and their compositional inverses over finite
fields in terms of their coefficients is a classical and difficult subject which
attracts people's interest partially due to their wide applications in coding theory
\cite{ding2013cyclic,ding2014binary,laigle2007permutation},
cryptography \cite{rivest1978method,schwenk1998public}, combinatorial design theory \cite{ding2006family}, and other areas of mathematics and engineering \cite{lidl1997finite,lidl1994introduction}.

In general, determining whether a polynomial over a finite field is a permutation polynomial is challenging. Moreover, computing the coefficients of the compositional inverse of a permutation polynomial is even more difficult, except for several well-studied classes such as monomials, linearized polynomials, and Dickson polynomials, which prossess a well-defined structure.
We refer readers to \cite{li2019compositional,wu2024permutation,wu2024some,wu2022further,yuan2022agwpps,yuan2022local,yuan2022permutation,zheng2020on,zheng2020onieee} for more details.

In 2011, Akbrary, Ghioca and Wang \cite{AGW} introduced a powerful method known as  the AGW criterion for
constructing permutation polynomials. In 2024, the coauthor \cite{yuan2022local} proposed the local criterion, which is entirely  equivalent to the AGW criterion, but has fewer constraints. Utilizing the local criterion, the coauthor also developed a local method to find the compositional inverses of permutation polynomials. In this paper, we further investigate the properties of permutation polynomials using local criterion. 

The remainder of this paper is organized as follows. In Section 2, we refine the local criterion and  introduces some related results. In Section 3, we present a  class of permutation polynomials over $\gf_{q^2}$ by applying the refinement local criterion from Section 2 and further investigate the properties of linearized polynomials. Finally,  in Section 4, we demonstrate that  linearized  polynomial over $\gf_{q^n}$ is a local permutation polynomial with respect to all linear transformations from $\gf_{q^n}$ to $\gf_q$ and every permutation polynomial is a local permutation polynomial with respect to certain mappings.   
\section{Auxiliary results and the main Lemma}
In this section, we present some auxiliary results that will be needed in the sequel.
 
 First, we recall the local criterion.  

\begin{lemma}\label{local method}\cite[Lemma 2.1 ]{yuan2022local}(Local Criterion)
	Let  $A$ and $S$ be finite sets,  and let $f(x): A\rightarrow A$ be a map. Then $f(x)$ is a bijection if and only if for any surjection  $\psi(x): A \rightarrow S,$ the composition  $\varphi(x)= \psi(x) \circ f(x)$ is a surjection, and $f(x)$ is injective  on $\varphi^{-1}(s) $ for each $s \in S.$
	
\end{lemma}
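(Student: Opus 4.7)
The plan is to prove both directions of the biconditional separately, noting that the forward direction is essentially formal while the backward direction exploits the fiber decomposition of $A$ induced by $\varphi$.

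For the forward direction ($\Rightarrow$), I would assume $f$ is a bijection of $A$, and let $\psi: A \to S$ be any surjection. Then $\varphi = \psi \circ f$ is a composition of a bijection with a surjection, hence itself a surjection from $A$ to $S$. Since $f$ is injective on all of $A$, it is certainly injective on every subset, including each fiber $\varphi^{-1}(s)$.

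For the backward direction ($\Leftarrow$), I would suppose the hypothesis holds and aim to show that $f$ is injective; together with the finiteness of $A$ this yields bijectivity. The key step is: if $f(a_1) = f(a_2)$ for some $a_1, a_2 \in A$, then $\psi(f(a_1)) = \psi(f(a_2))$, so $\varphi(a_1) = \varphi(a_2) = s$ for some $s \in S$. This places both $a_1$ and $a_2$ in the fiber $\varphi^{-1}(s)$, and the injectivity of $f$ on this fiber forces $a_1 = a_2$. (Alternatively, reading ``for any surjection'' as universal, one can specialize to $\psi = \mathrm{id}_A$ with $S = A$, in which case $\varphi = f$ is surjective, hence bijective by finiteness; but the argument above has the advantage of working with an arbitrary fixed $\psi$, which is how the criterion is used in practice.)

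The main obstacle is conceptual rather than technical: once the statement is unpacked, the proof is short. The genuine content of the lemma lies in the flexibility one gains by choosing $\psi$ so that $\varphi = \psi \circ f$ has an easily analyzable image and so that the restriction of $f$ to each fiber $\varphi^{-1}(s)$ is tractable, which is precisely what makes the criterion a useful tool for constructing permutation polynomials in later sections.
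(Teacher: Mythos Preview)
Your proof is correct. Both directions are handled cleanly: the forward direction is immediate since a bijection composed with a surjection is surjective and a bijection is injective on every subset; the backward direction uses the key observation that $f(a_1)=f(a_2)$ forces $a_1,a_2$ into the same fiber $\varphi^{-1}(s)$, whereupon the fiberwise injectivity hypothesis gives $a_1=a_2$, and finiteness of $A$ upgrades injectivity to bijectivity.

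Note, however, that the paper does not supply its own proof of this lemma: it is quoted directly from \cite[Lemma~2.1]{yuan2022local} and stated without proof, followed only by the commutative diagram. So there is no in-paper argument to compare against. Your observation about the quantifier is apt: although the lemma is phrased with ``for any surjection $\psi$,'' the way it is invoked later (for instance in the proof of Theorem~\ref{threlo}) is with a single chosen $\psi$, and your fiber argument establishes exactly this stronger existential version, which is what the applications require.
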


$$\xymatrix{
A \ar[rr]^{f(x)}\ar[dr]_{\varphi(x)}& & A\ar[dl]^{\psi(x)}\\
&S
}$$

%

\begin{corollary}\label{colene}
	Let $A $ and $S$ be finite sets, and let $\varphi(x): A \rightarrow S$ be a surjective map.
	If the map $f(x): A \rightarrow A $ is bijective, then $f(x)$ is injective on $\varphi^{-1}(x)$ for each $s \in S.$
\end{corollary}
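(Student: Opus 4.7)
The plan is to observe that this corollary is an almost immediate consequence of the definition of bijectivity, together with the elementary fact that injectivity is inherited by restriction to subsets.

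First, I would note that since $f: A \to A$ is bijective it is, in particular, injective on $A$. For every $s \in S$, the fiber $\varphi^{-1}(s) = \{a \in A : \varphi(a) = s\}$ is a subset of $A$, and the restriction of an injective map to any subset of its domain remains injective. That alone yields the desired injectivity of $f$ on $\varphi^{-1}(s)$.

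An alternative route, which fits the theme of the paper, is to deduce the claim from Lemma \ref{local method} directly. Given the surjection $\varphi: A \to S$ and the bijection $f$, I would set $\psi := \varphi \circ f^{-1}: A \to S$. Then $\psi$ is a composition of surjections, hence surjective, and $\psi \circ f = \varphi \circ f^{-1} \circ f = \varphi$. Applying the necessity (only-if) direction of Lemma \ref{local method} to this $\psi$ returns precisely the statement that $f$ is injective on each fiber $\varphi^{-1}(s)$.

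I do not foresee any genuine obstacle here. The content of the corollary is simply to isolate the forward direction of the local criterion in a form where the surjection $\varphi: A \to S$ is given abstractly, rather than presented a priori as a composition of the form $\psi \circ f$. The only care required is to interpret $\varphi^{-1}(s)$ as the fiber over $s$ and to apply the appropriate half of Lemma \ref{local method}.
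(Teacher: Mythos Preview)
Your proposal is correct. The paper does not supply an explicit proof of this corollary; it is simply stated as an immediate consequence of Lemma~\ref{local method}, which is precisely your second route (taking $\psi=\varphi\circ f^{-1}$), while your first route---restriction of an injective map to a subset---is even more elementary and equally valid.
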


According to  Corollary \ref{colene},  the requirement  that $f(x)$ is injective on $\varphi^{-1}(s)$ for each $s \in S$ is necessary when  discussing whether $f(x)$ is a bijection or not. Based on this,  we present the following refinement of the local criterion. 
\begin{theorem}\label{threlo}
	Let $A$ and $S$ be finite sets. Assume that $\varphi(x):A \rightarrow S$ and $f(x): A\rightarrow A $ are maps. 
	If $\varphi(x)$ is surjective and $f(x)$ is injective on $\varphi^{-1}(x)$ for each $s \in S,$ then $f(x)$ is bijective if and only if there exists a unique surjection $\psi(x): A \rightarrow S$ such that $\varphi(x)=\psi(x) \circ f(x).$ Moreover,  there are  $\prod_{s \in S}\left(|\varphi^{-1}(s)|!\right)$ bijections $f(x)$ determined by the map $\varphi(x).$
\end{theorem}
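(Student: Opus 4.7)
The plan is to treat the equivalence and the counting assertion separately. For the forward implication, suppose $f$ is a bijection; I would set $\psi := \varphi \circ f^{-1}$ and verify the three required properties: $\psi$ sends $A$ to $S$, $\psi$ is surjective (since $\varphi$ is surjective and $f^{-1}$ is a bijection), and $\psi \circ f = \varphi$ by direct computation. Uniqueness is then automatic, because if $\psi'$ were another such surjection, composing $\psi' \circ f = \varphi$ with $f^{-1}$ on the right forces $\psi' = \varphi \circ f^{-1} = \psi$.

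For the converse, assume that a surjection $\psi$ with $\psi \circ f = \varphi$ exists. Since $A$ is finite, it is enough to prove $f$ injective. Suppose $f(a_1) = f(a_2)$; applying $\psi$ gives $\varphi(a_1) = \varphi(a_2)$, placing both elements in a common fiber $\varphi^{-1}(s)$. The standing hypothesis that $f$ is injective on this fiber then forces $a_1 = a_2$, completing the proof of the equivalence. Note that this half of the argument is essentially an application of Corollary~\ref{colene} combined with a pigeonhole step, so no new machinery is needed.

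For the counting assertion, I would read ``the bijections $f$ determined by $\varphi$'' as those $f$ for which the local criterion is witnessed by $\varphi$ itself, i.e.\ bijections satisfying $\varphi \circ f = \varphi$; equivalently, bijections that permute each fiber $\varphi^{-1}(s)$ within itself. Any such $f$ decomposes as an independent choice of permutation on each fiber $\varphi^{-1}(s)$, and conversely any system of fiberwise permutations glues to a global bijection $f$ preserving the partition induced by $\varphi$. Since a set of size $|\varphi^{-1}(s)|$ admits exactly $|\varphi^{-1}(s)|!$ permutations, multiplying over $s \in S$ yields precisely $\prod_{s \in S}(|\varphi^{-1}(s)|!)$.

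The main subtlety I anticipate is the semantic one of pinning down what ``determined by $\varphi$'' means, since the equivalence part of the theorem allows $\psi \neq \varphi$; reading the counting as the case $\psi = \varphi$ (fiber-preserving bijections) is the interpretation that aligns with the product formula and with the standard combinatorial invariant one attaches to a surjection. Once that interpretation is fixed, the two implications of the equivalence and the counting reduce to short and essentially mechanical verifications organized around the canonical choice $\psi = \varphi \circ f^{-1}$.
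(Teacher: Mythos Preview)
Your argument for the equivalence is correct and matches the paper's: the paper also builds $\psi$ from the bijection $f$ by declaring $\psi(x)=s$ whenever $x\in f(\varphi^{-1}(s))$, which is exactly your formula $\psi=\varphi\circ f^{-1}$ written fiberwise, and the converse is handled in the paper by citing the Local Criterion (Lemma~\ref{local method}), which unpacks to precisely your short injectivity argument.

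The one place you diverge is the reading of the counting claim. You count bijections with $\varphi\circ f=\varphi$, i.e.\ fiber-preserving permutations. The paper instead fixes an arbitrary pair of surjections $\varphi,\psi:A\to S$ and counts bijections $f$ with $\psi\circ f=\varphi$; such an $f$ must restrict to a bijection $\varphi^{-1}(s)\to\psi^{-1}(s)$ for every $s$, giving the same product $\prod_{s}|\varphi^{-1}(s)|!$ (with the implicit compatibility $|\varphi^{-1}(s)|=|\psi^{-1}(s)|$, which the first part of the proof guarantees). Your interpretation is the special case $\psi=\varphi$, so your count is correct, but the paper's broader reading is the one actually invoked later (see the Remark after Theorem~\ref{thq^2}, where $\psi_1(x)=ax^q+x$ differs from $\varphi_1$), so it is worth stating the count at that level of generality.
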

\begin{proof}
	The sufficiency is obvious by Lemma \ref{local method}; now, we prove the necessity.
	Let $S=\{s_1, s_2, \cdots, s_t\}.$ Since $\varphi(x)$ is a surjection from $A$ to $S,$ $A=\uplus_{i=1}^t\varphi^{-1}(s_i)$ is a disjoint union of the subsets $\varphi^{-1}(s_i), i=1, 2, \cdots, t.$ If $f(x)$ is bijective, then $f(\varphi^{-1}(s_i))\cap f(\varphi^{-1}(s_j))=\emptyset$ for distinct $s_i, s_j \in S,$ whence $| \varphi^{-1}(s_i) | = | f(\varphi^{-1}(s_i)) |, i=1, 2, \cdots, t.$ So $A=\uplus_{i=1}^tf(\varphi^{-1}(s_i))$ is also a disjoint union.
	
	 Now, we define a map 
	 $\psi(x): A\rightarrow S$ by $\psi(x)=s$ for any $s \in S$ and  $x \in f(\varphi^{-1}(s)).$   
	 It is easy to check that $\psi(x)$ is a surjection from $A$ to $S$ and $\varphi(x)= \psi(x) \circ f(x).$ Moreover, for any  $s \in S,$ $f(\varphi^{-1}(x))=\psi^{-1}(s)$, so $\psi(x)$ is uniquely determined by $f(x)$ and $\varphi(x).$
	 
	 On the other hand, for given surjections $\varphi(x): A\rightarrow S, \psi(x): A\rightarrow S,$ if $f(x)$ is bijective from $A$ to $A$ such that $\varphi(x)=\psi(x)\circ f(x),$ then $f(x)$ must be a bijection from $\varphi^{-1}(s)$ to $\psi^{-1}(s)$ for all $s \in S.$ Hence, the number of  $f(x)$ determined by $\varphi(x)$ and $\psi(x)$ is $\prod_{s \in S}\left(|\varphi^{-1}(s)|!\right).$
	 We are done. 	 
\end{proof}

%
Yuan \cite{yuan2022local} presented a local method to find the compositional inverses of permutation polynomials.

\begin{lemma}\label{ff-}\cite[Theorem 2.2]{yuan2022local}
	Let $q$ be a prime power and $f(x)$ be a polynomial over $\gf_q.$ Then
	$f(x)$ is a permutation polynomial over $\gf_q$ if and only if there exist nonempty finite
	subsets $S_i$, $i=1, 2, \cdots, t$ of $\gf_q$ and maps $\psi_i(x): \gf_q \rightarrow S_i$, $i=1, 2, \cdots, t$ such that $\psi_i(x)\circ f(x)=\varphi_i(x),$ $i=1, 2, \cdots, t$
	and $x=F(\varphi_1(x), \varphi_2(x), \cdots, \varphi_t(x)),$ where $F(x_1, x_2, \cdots, x_t)\in \gf_q[x_1, x_2, \cdots, x_t].$ Moreover, the compositional inverse of $f(x)$ is given by
	$$f^{-1}(x)=F(\psi_1(x), \psi_2(x), \cdots, \psi_t(x)).$$
\end{lemma}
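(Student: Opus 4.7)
The plan is to prove the two implications separately and then read off the inverse formula from the sufficiency argument. The necessity is essentially immediate: if $f(x)$ is a permutation polynomial, I would take $t=1$, $S_1 = \gf_q$, and $\psi_1(x) = f^{-1}(x)$, so that $\varphi_1(x) = \psi_1(f(x)) = x$ and the polynomial $F(x_1) = x_1$ witnesses $x = F(\varphi_1(x))$.

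For the sufficiency direction, I would argue injectivity of $f$ directly from the hypothesis. Suppose $f(a) = f(b)$ for some $a, b \in \gf_q$. Applying each $\psi_i$ yields $\varphi_i(a) = \psi_i(f(a)) = \psi_i(f(b)) = \varphi_i(b)$ for every $i$, and substituting into the identity $x = F(\varphi_1(x), \ldots, \varphi_t(x))$ gives
$$a = F(\varphi_1(a), \ldots, \varphi_t(a)) = F(\varphi_1(b), \ldots, \varphi_t(b)) = b.$$
Thus $f$ is injective on the finite set $\gf_q$ and hence a permutation. An equivalent route is to bundle the $\psi_i$ into a single map $\psi \colon \gf_q \to S_1 \times \cdots \times S_t$ and invoke Lemma \ref{local method}, since the identity above forces each fibre of $\varphi = (\varphi_1, \ldots, \varphi_t)$ to be a singleton; but the direct substitution above is more transparent.

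For the compositional inverse, fix $y \in \gf_q$ and set $x = f^{-1}(y)$, which exists by what we have just shown. Then
$$F(\psi_1(y), \ldots, \psi_t(y)) = F(\psi_1(f(x)), \ldots, \psi_t(f(x))) = F(\varphi_1(x), \ldots, \varphi_t(x)) = x = f^{-1}(y),$$
so $f^{-1}$ and $F(\psi_1, \ldots, \psi_t)$ coincide as functions on $\gf_q$; reduction modulo $x^q - x$ then gives the polynomial identity. I do not expect a genuine obstacle in this proof: the lemma is a compact repackaging of the observation that $f$ is injective whenever the auxiliary invariants $\varphi_i$ together determine the input, and the only point that requires a touch of care is the routine passage between functional equality on $\gf_q$ and polynomial identity modulo $x^q - x$.
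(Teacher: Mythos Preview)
Your proof is correct. The paper itself does not supply a proof of this lemma: it is quoted verbatim as \cite[Theorem 2.2]{yuan2022local} and used as a black box, so there is no in-paper argument to compare against. Your necessity witness $(t=1,\ \psi_1=f^{-1},\ F(x_1)=x_1)$ is the natural one, the sufficiency argument by direct injectivity is clean and complete, and the derivation of the inverse formula by substituting $y=f(x)$ is exactly the intended reading of the statement. Nothing further is needed.
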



The following lemma is crucial when applying the refined local criterion in construct a class of permutation polynomials in Section 2.

\begin{lemma}\cite[Lemma 2.3]{yuan2015permutation}\label{lex^q}
For a prime power $q,$ assume that $a \in \gf_{q^2}$  with $a^{q+1}=1$ and $g\in \gf_{q^2}$ is a primitive element of $\gf_{q^2}.$ Let $t$ be the non-negative  integer $a=g^{q+1}$ and $0\leq t\leq q.$ Let $\varphi(x)=x^q+ax$ and $\psi(x)=ax^q+x$ be polynomials over $\gf_{q^2}.$
Then ${\rm Im}(\varphi(x))={\rm Im}(\psi(x))=\{g^{-t}b \mid b \in \gf_q\}.$
  \end{lemma}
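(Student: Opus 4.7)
The plan is to exploit the relation $a^q = a^{-1}$, which is immediate from the hypothesis $a^{q+1}=1$, to force the values of $\varphi$ and $\psi$ into a one-dimensional $\mathbb{F}_q$-subspace of $\mathbb{F}_{q^2}$, and then close the argument by counting.

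First I would verify directly that
\[
\varphi(x)^q = x^{q^2} + a^q x^q = x + a^{-1}x^q = a^{-1}\varphi(x),
\]
and analogously $\psi(x)^q = a^{-1}\psi(x)$. Consequently every element $y$ of $\mathrm{Im}(\varphi)\cup\mathrm{Im}(\psi)$ lies in the set $T := \{\,y\in \gf_{q^2} : y^q = a^{-1}y\,\}$. Using the assumed relation $a = g^{t(q-1)}$, I would identify $T$ with $g^{-t}\gf_q$: for a nonzero $y\in T$, the condition $y^{q-1}=g^{-t(q-1)}$ is equivalent to $(g^t y)^{q-1}=1$, i.e.\ $g^t y\in \gf_q^*$; the reverse inclusion is a one-line check. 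Hence $T=g^{-t}\gf_q$ and $|T|=q$.

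To finish, I would show that $\varphi$ and $\psi$ each attain the full subspace $T$ by computing their kernels. The kernel of $\varphi$ is $\{0\}\cup\{x : x^{q-1}=-a\}$; since $(-a)^{q+1}=(-1)^{q+1}a^{q+1}=1$ in every characteristic, $-a$ lies in the image of the $(q-1)$-th power map on $\gf_{q^2}^*$, so $x^{q-1}=-a$ has exactly $q-1$ solutions and $|\ker\varphi|=q$. The case of $\psi$ is symmetric, with $-a^{-1}$ playing the role of $-a$. Since both maps are $\gf_q$-linear on the $2$-dimensional space $\gf_{q^2}$, rank-nullity gives $|\mathrm{Im}(\varphi)|=|\mathrm{Im}(\psi)|=q=|T|$, and the inclusions from the previous step become equalities.

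The main obstacle is mostly notational: one must keep track of the sign of the exponent in the identification $T=g^{-t}\gf_q$ and confirm in both parities of $q$ that $-a$ really has norm $1$. Once the Frobenius relation $y^q=a^{-1}y$ is secured, everything reduces to standard $\gf_q$-linear algebra plus a dimension count.
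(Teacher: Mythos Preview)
The paper does not supply its own proof of this lemma; it is quoted verbatim from \cite{yuan2015permutation}. The only in-text argument touching the same circle of ideas appears in the proof of Lemma~\ref{lenu}, where the authors note that $a^{q+1}=1$ forces $(x^q+ax)^q=a^{-1}(x^q+ax)$ and then assert that ${\rm Im}(x^q+ax)=\{y:ay^q=y\}$ ``follows easily.'' Your proposal is exactly a fleshed-out version of that remark: you derive the Frobenius relation $y^q=a^{-1}y$, identify the eigenspace with $g^{-t}\gf_q$ via the substitution $a=g^{t(q-1)}$, and then use a kernel/rank--nullity count to upgrade the inclusion to an equality. All steps are correct, including the parity check that $(-a)^{q+1}=1$ in both odd and even characteristic, and the symmetric treatment of $\psi$.

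One small comment: the lemma as printed contains an evident typo (``$a=g^{q+1}$'' where ``$a=g^{t(q-1)}$'' is meant, since the $(q+1)$-th roots of unity in $\gf_{q^2}^*$ are precisely the $g^{j(q-1)}$). You silently made the correct reading; it would be worth flagging explicitly so a reader is not confused.
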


Based on the lemma above, we determine  whether the  equation $x^q+ax=d,$ where $a^{q+1}=1$,  has roots over  $\gf_{q^2}$ or not. This analysis is crucial for assessing the  number of permutation polynomials in Section 2. 

\begin{lemma}\label{lenu}
	For a prime power $q,$ assume that $a\ in \gf_{q^2}$  with $a^{q+1}=1.$
Then the affine $q$-polynomial  $x^q+ax-d$ has $q$ roots if and only if $ad^q=d.$ 
\end{lemma}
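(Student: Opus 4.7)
The plan is to view $\varphi(x) := x^q + ax$ as an $\gf_q$-linear map from $\gf_{q^2}$ to itself. Then the equation $x^q+ax=d$ either has no root (when $d \notin {\rm Im}(\varphi)$) or exactly $|\ker \varphi|$ roots (when $d \in {\rm Im}(\varphi)$). By Lemma \ref{lex^q} the image ${\rm Im}(\varphi)$ has exactly $q$ elements, and since $|\gf_{q^2}|=q^2$, rank--nullity over $\gf_q$ forces $|\ker \varphi|=q$. Thus the statement reduces to showing that $d \in {\rm Im}(\varphi)$ if and only if $ad^q = d$.

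For the forward implication I would just substitute: if $d = c^q + ac$ for some $c \in \gf_{q^2}$, then $d^q = c + a^q c^q$, so $ad^q = ac + a^{q+1}c^q = ac + c^q = d$ using $a^{q+1}=1$. This already gives the inclusion ${\rm Im}(\varphi) \subseteq K$, where $K := \{d \in \gf_{q^2} : ad^q = d\}$.

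For the converse I would use a cardinality comparison. The set $K$ is the kernel of the $\gf_q$-linear map $\eta(d) := ad^q - d$ on the two-dimensional $\gf_q$-space $\gf_{q^2}$, so $|K| \in \{1,q,q^2\}$. The map $\eta$ is not identically zero: if $a \neq 1$ then $\eta(1) = a-1 \neq 0$, while if $a = 1$ then $\eta(d) = d^q - d$ is nonzero on any $d \in \gf_{q^2} \setminus \gf_q$. Hence $|K| \leq q$. Combined with ${\rm Im}(\varphi) \subseteq K$ and $|{\rm Im}(\varphi)| = q$, this forces ${\rm Im}(\varphi) = K$, yielding the desired equivalence and the count of $q$ roots whenever solvability holds.

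The main (and only mild) obstacle is verifying that $\eta$ is nonzero so as to bound $|K|$ from above; everything else is a one-line substitution and routine linear-algebraic bookkeeping built on Lemma \ref{lex^q}.
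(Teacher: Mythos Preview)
Your proof is correct and follows essentially the same route as the paper: both establish the inclusion ${\rm Im}(\varphi)\subseteq\{d:ad^q=d\}$ (the paper via the identity $(x^q+ax)^q=a^{-1}(x^q+ax)$, you via direct substitution) and then invoke Lemma~\ref{lex^q} to match cardinalities. Your version simply makes explicit the rank--nullity step and the bound $|K|\le q$ that the paper leaves as ``it follows easily.''
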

\begin{proof}
Since $a^{q+1}=1,$ we get  $(x^q+ax)^q=a^{-1}(x^q+ax)$.  It follows  easily  that ${\rm Im}(x^q+ax)=\{y \mid ay^q=y\}.$ Then we conclude the desired result. 	
\end{proof}
The trace function $\Tr(x)$ from $\gf_{q^n}$ to $\gf_q$ is defined by 
 $$\Tr(x)=x+x^q+\cdots+x^{q^{n-1}}.$$

Every linearized polynomial over $\gf_{q^n}$ can be written as $$L(x)=\Tr(\theta_1x)\omega_1+\Tr(\theta_2x)\omega_2 +\cdots+\Tr(\theta_nx)\omega_n \,\, \text{with}\,\, \omega_i\in\gf_{q^n},$$ where $\{\theta_1, \theta_2,\cdots, \theta_n\}$ forms a basis of $\gf_{q^n}$ over $\gf_{q}$.
The following  lemma  is important in further investigating the properties of linearized permutation polynomials. 


\begin{lemma}\cite[Theorem 1.2]{yuan2011note}\label{lemmalinearized}
	Let $\{\theta_1, \theta_2,\cdots, \theta_n\}$ be any given basis of $\gf_{q^n}$ over $\gf_{q}$, and let $$L(x)=\Tr(\theta_1x)\omega_1+\Tr(\theta_2x)\omega_2 +\cdots+\Tr(\theta_nx)\omega_n \,\, \text{with}\,\, \omega_i\in\gf_{q^n}.$$
	Then $L(x)$ is a permutation polynomial over $\gf_{q^n}$ if and only if  $\{\omega_1, \omega_2, \cdots, \omega_n \}$ is a  basis of $\gf_{q^n}$ over $\gf_{q}.$
\end{lemma}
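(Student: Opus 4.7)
The plan is to exploit the $\gf_q$-linearity of $L(x)$. Since $L$ maps the $n$-dimensional $\gf_q$-vector space $\gf_{q^n}$ to itself and is $\gf_q$-linear, $L$ is a permutation of $\gf_{q^n}$ if and only if it is injective as a $\gf_q$-linear map. I would split $L$ as $L = \Psi \circ \Phi$, where
$$\Phi: \gf_{q^n}\to \gf_q^n, \quad \Phi(x) = (\Tr(\theta_1 x), \Tr(\theta_2 x), \ldots, \Tr(\theta_n x)),$$
and
$$\Psi: \gf_q^n \to \gf_{q^n}, \quad \Psi(a_1,\ldots,a_n) = a_1\omega_1 + \cdots + a_n\omega_n.$$
Both maps are $\gf_q$-linear, and their composition recovers $L$.

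The key step is to show that $\Phi$ is a $\gf_q$-linear isomorphism. Because $\{\theta_1, \ldots, \theta_n\}$ is a basis of $\gf_{q^n}$ over $\gf_q$ and the trace form $(x,y) \mapsto \Tr(xy)$ is nondegenerate, $\Phi(x) = 0$ would force $\Tr(\theta x) = 0$ for every $\theta \in \gf_{q^n}$ (by writing $\theta$ in the given basis), and hence $x = 0$. An injection between equidimensional finite $\gf_q$-spaces is an isomorphism, so $\Phi$ is bijective.

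Given that $\Phi$ is bijective, $L$ is a permutation of $\gf_{q^n}$ if and only if $\Psi$ is a bijection, which happens exactly when $\{\omega_1, \ldots, \omega_n\}$ is $\gf_q$-linearly independent, i.e., forms a basis of $\gf_{q^n}$ over $\gf_q$. Both directions of the equivalence follow at once: the forward direction says the image of $L$, which lies in $\mathrm{span}_{\gf_q}\{\omega_i\}$, must fill $\gf_{q^n}$, so the $\omega_i$ span; conversely, if the $\omega_i$ are independent then $L(x) = 0$ forces every $\Tr(\theta_i x)$ to vanish, so $x = 0$.

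The main obstacle is modest: it is essentially the nondegeneracy of the trace form, which guarantees that $\Phi$ has trivial kernel. Alternatively, one could route the argument through the refined local criterion (Theorem \ref{threlo}) by taking $\varphi = \Phi$ as a surjection onto $S = \gf_q^n$ and constructing $\psi$ from $\Psi^{-1}$ in the basis case; but the direct linear-algebraic route above is shorter and avoids introducing further structure.
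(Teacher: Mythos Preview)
Your argument is correct. The decomposition $L = \Psi \circ \Phi$ with $\Phi(x) = (\Tr(\theta_1 x),\ldots,\Tr(\theta_n x))$ and $\Psi(a_1,\ldots,a_n) = \sum a_i\omega_i$ is exactly the right move, and the nondegeneracy of the trace form gives the bijectivity of $\Phi$ cleanly. Both directions of the equivalence follow as you describe.

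As for comparison: the paper does not supply a proof of this lemma at all. It is quoted as \cite[Theorem 1.2]{yuan2011note} and used as a black box, so there is no ``paper's own proof'' to compare against. Your linear-algebraic route is the standard one and is essentially how the result is proved in the original reference. The alternative you mention at the end --- routing through Theorem~\ref{threlo} --- would indeed work but is unnecessary overhead here, since each fiber $\Phi^{-1}(s)$ is a singleton and the local criterion degenerates to ordinary bijectivity.
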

The following lemma explains how to determine whether a set of elements forms a basis of $\gf_{q^n}$ over $\gf_q$ or not. 
\begin{lemma}\cite[Lemma 3.51]{lidl1997finite}\label{lemma 3.51}
	Let $\beta_1, \beta_2, \cdots, \beta_n $
	be elements of $\gf_{q^m}.$ Then 

	\end{lemma}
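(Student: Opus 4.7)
The plan is to prove the standard determinantal characterization: the elements $\beta_1,\beta_2,\ldots,\beta_n$ form a basis of $\gf_{q^n}$ over $\gf_q$ if and only if the matrix
$$M=\begin{pmatrix} \beta_1 & \beta_2 & \cdots & \beta_n \\ \beta_1^q & \beta_2^q & \cdots & \beta_n^q \\ \vdots & \vdots & & \vdots \\ \beta_1^{q^{n-1}} & \beta_2^{q^{n-1}} & \cdots & \beta_n^{q^{n-1}} \end{pmatrix}$$
is nonsingular. Since $n$ is the dimension of $\gf_{q^n}$ over $\gf_q$, being a basis is equivalent to being linearly independent over $\gf_q$, so it suffices to show that the $\beta_i$ are linearly dependent over $\gf_q$ if and only if $\det(M)=0$.

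First I would handle the easier direction: assume there exist $c_1,\ldots,c_n\in\gf_q$, not all zero, such that $\sum_{j=1}^n c_j\beta_j=0$. Raising both sides to the $q^i$-th power and using $c_j^{q^i}=c_j$ (since $c_j\in\gf_q$) together with the Frobenius being a ring homomorphism, I get $\sum_{j=1}^n c_j\beta_j^{q^i}=0$ for every $i=0,1,\ldots,n-1$. This says $(c_1,\ldots,c_n)^{T}$ is a nonzero vector in the kernel of $M$, so $\det(M)=0$.

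For the converse I would argue contrapositively: suppose the $\beta_j$ are linearly independent over $\gf_q$, hence form a basis of $\gf_{q^n}$ over $\gf_q$. I want to show $\det(M)\ne 0$. Take any vector $(c_1,\ldots,c_n)^T\in\gf_{q^n}^n$ in the kernel of $M$, so $\sum_j c_j\beta_j^{q^i}=0$ for $i=0,\ldots,n-1$. The key idea is to apply Frobenius to each such equation: from $\sum_j c_j\beta_j^{q^i}=0$, raising to the $q$-th power gives $\sum_j c_j^q\beta_j^{q^{i+1}}=0$. Comparing with the identity $\sum_j c_j\beta_j^{q^{i+1}}=0$ (which holds for $i+1\le n-1$, and by Fermat's identity $\beta^{q^n}=\beta$ also for $i=n-1$), subtracting yields $\sum_j(c_j^q-c_j)\beta_j^{q^{i+1}}=0$ for all $i$; in particular at $i+1=0$ we get $\sum_j(c_j^q-c_j)\beta_j=0$. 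Since $\{\beta_1,\ldots,\beta_n\}$ is a basis of $\gf_{q^n}$ over $\gf_q$, and in fact by applying linear independence appropriately I can conclude $c_j^q=c_j$, that is $c_j\in\gf_q$ for every $j$. But then the original relation $\sum_j c_j\beta_j=0$ with $c_j\in\gf_q$ and linear independence forces all $c_j=0$. Hence $\ker M=\{0\}$ and $\det(M)\ne 0$.

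The main obstacle is the converse: the naive attempt produces a kernel vector over $\gf_{q^n}$, but one must promote such a vector to one defined over $\gf_q$ before contradicting linear independence. I handle this via the Frobenius-invariance trick above, using the cyclic way Frobenius permutes the rows of $M$ to force the coordinates of any kernel vector to be fixed by the Frobenius, and therefore to lie in $\gf_q$. Once this is set up, both directions close quickly.
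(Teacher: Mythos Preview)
The paper does not supply its own proof of this lemma; it is quoted from Lidl--Niederreiter. Note, however, that the full statement (the part displayed just after the lemma environment) is the explicit Moore product formula
\[
\det M \;=\; \beta_1\prod_{j=1}^{n-1}\prod_{c_1,\ldots,c_j\in\gf_q}\Bigl(\beta_{j+1}-\sum_{k=1}^j c_k\beta_k\Bigr),
\]
and the nonvanishing criterion is read off from this product: the determinant is zero iff some factor vanishes, i.e.\ iff some $\beta_{j+1}$ lies in the $\gf_q$-span of $\beta_1,\ldots,\beta_j$. Your plan proves only the corollary (nonsingularity $\Leftrightarrow$ $\gf_q$-linear independence) and does not touch the product identity. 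For the uses the present paper makes of the lemma that is in fact enough, but it is a different and weaker statement than what is quoted.

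More seriously, your converse direction has a real gap. From $\sum_j (c_j^{q}-c_j)\beta_j=0$ you invoke ``linear independence'' to force $c_j^{q}=c_j$. But the $\beta_j$ are only assumed independent over $\gf_q$, while the coefficients $c_j^{q}-c_j$ lie in $\gf_{q^n}$; a $\gf_q$-basis of $\gf_{q^n}$ is never $\gf_{q^n}$-linearly independent once $n\ge 2$, so nothing follows directly. What your computation actually shows is that $\ker M$ is stable under the coordinatewise Frobenius. To finish, run a descent: pick a nonzero $(c_1,\ldots,c_n)\in\ker M$ with the fewest nonzero entries, scale so one entry equals $1$, and observe that $(c_1^{q}-c_1,\ldots,c_n^{q}-c_n)\in\ker M$ has strictly smaller support, hence is zero; thus all $c_j\in\gf_q$, and now $\gf_q$-independence gives the contradiction. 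Also be aware that your argument uses $\beta_j^{q^n}=\beta_j$, i.e.\ it silently assumes $m=n$; the cited lemma is stated for $\beta_j\in\gf_{q^m}$ with no such restriction, and the product-formula proof (or the row/linearized-polynomial argument) handles the general case.
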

$\left|
\begin{array}{ccccc}
\beta_1 & \beta_1^q & \beta_1^{q^2}&\cdots&\beta_1^{q^{n-1}} \\
\beta_2 & \beta_2^q & \beta_2^{q^2}&\cdots&\beta_2^{q^{n-1}} \\
\vdots&\vdots&\vdots&&\vdots\\
\beta_n & \beta_n^q & \beta_n^{q^2}&\cdots&\beta_n^{q^{n-1}} \\
\end{array}
\right|=\beta_1\prod_{j=1}^{n-1}\prod_{c_1, \cdots, c_j \in \gf_q}\left(\beta_{j+1}-\sum_{k=1}^jc_k\beta_k\right),$
and so the determinant is $\neq 0$ if and only if $\beta_1, \beta_2, \cdots, \beta_n $ are linearly independent over $\gf_q.$

\begin{lemma}\cite[Theorem 2.24]{lidl1997finite}\label{lemma2.24}
	Let $\gf_{q^n}$ be a finite extension of the  field $\gf_q,$ both considered as vector spaces over $\gf_q.$ Then the linear transformations  from $\gf_{q^n}$ into $\gf_q$ are exactly the mappings $L_{\beta}, \beta \in \gf_{q^n},$ where $L_{\beta}(x)=\Tr(\beta x)$ for all $x \in \gf_{q^n}.$ Furthermore, we have $L_{\beta} \neq L_{\gamma} $ whenever $\beta$ and $\gamma $ are distinct elements of $\gf_{q^n}.$
\end{lemma}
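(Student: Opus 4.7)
The plan is to prove the lemma in two halves: first verify that every map of the form $L_\beta(x)=\Tr(\beta x)$ is $\gf_q$-linear and that the assignment $\beta\mapsto L_\beta$ is injective, then deduce by a dimension/counting argument that every $\gf_q$-linear map $\gf_{q^n}\to\gf_q$ arises this way.

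For the first half, I would observe that the trace $\Tr:\gf_{q^n}\to\gf_q$ is $\gf_q$-linear (additivity is clear from the Frobenius, and $\Tr(cx)=c\Tr(x)$ for $c\in\gf_q$ since $c^{q^i}=c$). Since multiplication by a fixed $\beta$ is also $\gf_q$-linear, the composition $L_\beta=\Tr\circ(\beta\,\cdot)$ is linear. For injectivity of $\beta\mapsto L_\beta$, it suffices to show that $L_\beta\equiv 0$ forces $\beta=0$. If $\beta\neq 0$, then as $x$ runs through $\gf_{q^n}$, so does $\beta x$, so $L_\beta$ has the same image as $\Tr$. Hence it is enough to know that $\Tr$ is not the zero map; this is standard because $\Tr(x)=x+x^q+\cdots+x^{q^{n-1}}$ is a nonzero polynomial of degree $q^{n-1}<q^n$, so it has fewer than $q^n$ roots in $\gf_{q^n}$.

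For the second half, I would count. The set $V$ of all $\gf_q$-linear maps $\gf_{q^n}\to\gf_q$ is a $\gf_q$-vector space of dimension $\dim_{\gf_q}\gf_{q^n}\cdot\dim_{\gf_q}\gf_q=n$, hence $|V|=q^n$. The map $\Phi:\gf_{q^n}\to V$, $\beta\mapsto L_\beta$, is itself $\gf_q$-linear (since $L_{\beta+\gamma}=L_\beta+L_\gamma$ and $L_{c\beta}=cL_\beta$ for $c\in\gf_q$) and, by the previous paragraph, injective. As $|\gf_{q^n}|=q^n=|V|$, $\Phi$ is forced to be a bijection, which yields both surjectivity (every linear $L$ has the form $L_\beta$) and the final distinctness claim $L_\beta\neq L_\gamma$ for $\beta\neq\gamma$.

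The only real obstacle is justifying that $\Tr$ is not identically zero; everything else is linear-algebraic bookkeeping. An alternative route, avoiding the polynomial-degree argument, is to invoke Lemma \ref{lemma 3.51} applied to any basis $\theta_1,\ldots,\theta_n$ of $\gf_{q^n}/\gf_q$: the nonvanishing of the displayed determinant is equivalent to the statement that the functionals $x\mapsto x^{q^i}$ (and hence their $\gf_q$-span, which contains $\Tr$ and all its scalar multiples) are linearly independent, which gives the nontriviality of $\Tr$ at once.
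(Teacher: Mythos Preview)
Your argument is correct and is essentially the standard textbook proof: linearity of $L_\beta$, injectivity of $\beta\mapsto L_\beta$ via nontriviality of $\Tr$, and surjectivity by equality of cardinalities. Note, however, that the paper does not supply its own proof of this lemma; it is simply quoted from \cite[Theorem~2.24]{lidl1997finite}, so there is no in-paper argument to compare against---your proof matches the proof in Lidl--Niederreiter almost verbatim.
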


\section{Permutation polynomials over finite fields  }
In this section, we construct a class of permutation polynomial of the form $f(x)=ux^q+vx+g(x^q+ax)$ over $\gf_{q^2}$ and further investigate the properties of linearized polynomials.
\begin{theorem}\label{thq^2}
	Let $q$ be a prime power. Assume that $a, u, v  \in \gf_{q^2}$ with $a^{q+1}=1, au-v\neq0$ and $g(x)=b_1x+b_2x^2+\cdots+b_{q-1}x^{q-1}.
	$ If one of the following conditions holds:\\
	(i) $\left(au+v^q+b_1^qa^{-1}+b_1a, u^q+av+a(b_1^qa^{-1}+b_1a)\right)=(c, ac)$ with some $c \in \gf_q^*,$ and  $b_i^qa^{-i}+b_ia=0,$ $i =2, 3 \cdots, q-1;$ \\
	(ii) $\left(u+av^q+b_1^q+b_1, au^q+v+a(b_1^q+b_1)\right)=(c, ac)$ with some $c \in \gf_q^*,$ and  $b_i^qa^{-i+1}+b_i=0,$ $i =2, 3 \cdots, q-1;$ \\	
then the polynomial $$f(x)=ux^q+vx+g(x^q+ax)$$ is a permutation polynomial  over $\gf_{q^2}.$ 

Moreover, if $\left(au+v^q+b_1^qa^{-1}+b_1a, u^q+av+a(b_1^qa^{-1}+b_1a)\right)=(c, ac)$ with some $c \in \gf_q^*,$ and  $b_i^qa^{-i}+b_ia=0,$ $i =2, 3 \cdots, q-1,$  then the compositional inverse of $f(x)$ over $\gf_{q^2}$ is given by $$f^{-1}(x)=(v-au)^{-1}\left(x-g\left(c^{-1}x^q+c^{-1}ax\right)-uc^{-1}(x^q+ax)\right).$$

If $\left(u+av^q+b_1^q+b_1, au^q+v+a(b_1^q+b_1)\right)=(c, ac)$ with some $c \in \gf_q^*,$ and  $b_i^qa^{-i+1}+b_i=0,$ $i =2, 3 \cdots, q-1,$  then the compositional inverse of $f(x)$ over $\gf_{q^2}$ is given by $$f^{-1}(x)=(v-au)^{-1}\left(x-g\left(c^{-1}ax^q+c^{-1}x\right)-uc^{-1}(ax^q+x)\right).$$ 
	\end{theorem}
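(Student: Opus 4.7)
The plan is to apply the refined local criterion (Theorem~\ref{threlo}) with the natural ``companion map'' $\varphi(x) = x^q + ax$. By Lemma~\ref{lex^q}, $\varphi$ is a surjection from $\gf_{q^2}$ onto $S := \{g^{-t}b : b \in \gf_q\}$, and moreover every element $y \in S$ satisfies $ay^q = y$, i.e.\ $y^q = a^{-1}y$ (equivalently $a^{q+1}=1$). To invoke Theorem~\ref{threlo} I would exhibit a surjection $\psi : \gf_{q^2} \to S$ with $\psi \circ f = \varphi$ and then verify that $f$ is injective on each fiber $\varphi^{-1}(s)$.

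For case (i) I would take $\psi(y) = c^{-1}(y^q + ay)$, which by Lemma~\ref{lex^q} also surjects onto $S$ after rescaling by $c^{-1} \in \gf_q^*$. Using $x^{q^2} = x$ on $\gf_{q^2}$ and writing $y = x^q + ax$, a direct expansion yields
\[
f(x)^q + af(x) = (v^q + au)x^q + (u^q + av)x + \sum_{i=1}^{q-1}\bigl(b_i^q a^{-i} + a b_i\bigr) y^i,
\]
because $y^{qi} = a^{-i}y^i$. The hypothesis $b_i^q a^{-i} + b_i a = 0$ for $i \geq 2$ kills all but the $i=1$ term, and after substituting $y = x^q + ax$ the coefficient condition $(au+v^q+b_1^qa^{-1}+b_1a,\,u^q+av+a(b_1^qa^{-1}+b_1a)) = (c,ac)$ collapses the right-hand side to $c(x^q + ax)$. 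Dividing by $c$ gives $\psi(f(x)) = \varphi(x)$. Case (ii) is handled symmetrically with $\psi(y) = c^{-1}(ay^q + y)$: the parallel identity $ag(y)^q + g(y) = \sum (b_i^q a^{-i+1} + b_i) y^i$ reduces under the stated hypotheses to $c\varphi(x) = af(x)^q + f(x)$.

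The fiber injectivity is uniform across both cases: if $x_1, x_2 \in \varphi^{-1}(s)$, then $z := x_1 - x_2$ satisfies $z^q = -az$, and $g(x_1^q + ax_1) = g(x_2^q + ax_2)$, so $f(x_1) - f(x_2) = uz^q + vz = (v - au)z$, which vanishes only when $z = 0$ because $v - au \neq 0$. Theorem~\ref{threlo} then certifies that $f$ is a permutation polynomial of $\gf_{q^2}$.

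For the compositional inverse I would apply Lemma~\ref{ff-} using the two identities $x^q + ax = \psi(f(x))$ and $ux^q + vx = f(x) - g(\psi(f(x)))$, the latter just rearranging the definition of $f$ together with $\psi(f(x)) = x^q + ax$. Eliminating $x^q$ by scaling the first equation by $u$ and subtracting produces $(v - au)x = f(x) - g(\psi(f(x))) - u\psi(f(x))$, whence $f^{-1}(x) = (v - au)^{-1}\bigl(x - g(\psi(x)) - u \psi(x)\bigr)$. Substituting the explicit $\psi$ for each case recovers the two claimed formulas. The main obstacle is the expansion producing $\psi \circ f = \varphi$: one must carefully track the action of the $q$-th power on $y^i$ using $y^q = a^{-1}y$ and check that the delicate asymmetric coefficient conditions in (i) versus (ii) are exactly what is needed to convert the resulting polynomial in $x$ and $x^q$ back into a scalar multiple of $x^q + ax$, rather than a generic $\gf_{q^2}$-linear combination of $x$ and $x^q$.
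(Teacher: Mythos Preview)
Your proposal is correct and follows essentially the same approach as the paper: the paper likewise applies Theorem~\ref{threlo} with the companion map $x^q+ax$, verifies $\psi\circ f=\varphi$ via the identity $y^q=a^{-1}y$ on $\mathrm{Im}(\varphi)$, checks fiber injectivity from $au-v\neq 0$, and recovers $f^{-1}$ by Lemma~\ref{ff-} exactly as you outline. The only cosmetic difference is that the paper treats case~(ii) in detail and defers case~(i), and it absorbs the scalar $c$ into $\varphi$ rather than into $\psi$ as you do.
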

\begin{proof}
First, we consider the second case. 
		
	Let $\varphi_1(x)=cx^q+acx$ and $\psi_1(x)=ax^q+x.$ For  $i =2, \cdots, q-1,$ since $b_i^qa^{-i+1}+b_i=0$  and $(x^q+ax)^q=a^{-1}(x^q+ax),$ and since moreover $\left(u+av^q+b_1^q+b_1, au^q+v+a(b_1^q+b_1)\right)=(c, ac)$ with $c \in \gf_q^*$, we have 
	\begin{align}\label{eqq2}		&\psi_1(x)\circ f(x)\nonumber\\
		=&\, (ax^q+x) \circ \left(ux^q+vx+b_1(x^q+ax)+b_2(x^q+ax)^2+\cdots+b_{q-1}(x^q+ax)^{q-1}\right)\nonumber\\
		=&\, (u+av^q)x^q+(au^q+v)x+(b_1^q+b_1)(x^q+ax)+(b_2^qa^{-1}+b_2)(x^q+ax)^2 \nonumber\\
		&+(b_3^qa^{-2}+b_3)(x^q+ax)^3+\cdots+(b_{q-1}^qa^{-(q-2)}+b_{q-1})(x^q+ax)^{q-1}
		\nonumber \\
		=&\, (u+av^q+b_1^q+b_1)x^q+(au^q+v+a(b_1^q+b_1))x\nonumber\\
		=&\, c(x^q+ax)\nonumber\\
		=&\, \varphi_1(x).
	\end{align}
It follows from Lemma \ref{lex^q} that  ${\rm Im}(x^q+ax)={\rm Im}(\varphi_1(x))={\rm Im}(\psi_1(x))=\{g^{-t}b \mid b \in \gf_q\}.$ Consequently, $\psi_1(x)$ is a surjection from $\gf_{q^2}$ to ${\rm Im}(\varphi_1(x)).$ 

Moreover, for any $s \in {\rm Im}(\varphi_1(x)),$ if there exist $x_1, x_2 \in \varphi_1^{-1}(s)$ such that $f(x_1)=f(x_2),$
then  it follows that 
\begin{equation*}\begin{cases}
	x_1^q+ax_1&=x_2^q+ax_2,\\
	ux_1^q+vx_1&=ux_2^q+vx_2.
\end{cases}
\end{equation*}
This implies $x_1=x_2$ because $au-v\neq0.$ Therefore, $f(x)$ is injective on $\varphi_1^{-1}(s)$ for each $s \in {\rm Im}(\varphi_1^{-1}(s)).$

	It follows from Theorem \ref{threlo} that $f(x)$ permutes $\gf_{q^2}.$ Furthermore,  let $\varphi_2(x)=x\circ f(x)$ and  $\psi_2(x)=x.$ Note that  $$(v-au)^{-1}\left(\varphi_2(x)-g\left(c^{-1}\varphi_1(x)\right)-uc^{-1}\varphi_1(x)\right)=x.$$
	It implies by Lemma \ref{ff-} that the compositional inverse of $$f^{-1}(x)=(v-au)^{-1}\left(x-g\left(c^{-1}ax^q+c^{-1}x\right)-uc^{-1}(ax^q+x)\right).$$ 
	In the first case, we will  prove $ (x^q+ax) \circ f(x) =c(x^q+ax).$ The remaining proof follows similarly, we omits the details. 
	This completes the proof.  
\end{proof}
\begin{remark}
Let the notations be defined in Theorem \ref{thq^2}. Now, we consider the count of $f(x)$  in Theorem \ref{thq^2}. We examine the solutions of the following system of equations
\begin{equation*}
	\begin{cases}
		au-v&\neq0,\\
		u+av^q+b_1^q+b_1&=c,\\
		au^q+v+a(b_1^q+b_1)&=ac,\\
		b_i^qa^{-i+1}+b_i&=0,		
	\end{cases}
\end{equation*}
which is equivalent to 
\begin{equation*}
	\begin{cases}
		au-v&\neq0,\\
		a(u+av^q)&=au^q+v,\\
		u+av^q+b_1^q+b_1&=c,\\
		b_i^qa^{-i+1}+b_i&=0.		
	\end{cases}
\end{equation*}
We rewrite the above system as 
\begin{equation*}
		\begin{cases}
		au-v&\neq0,\\
		u^q-u&=av^q-a^{-1}v,\\
		b_1^q+b_1&=c-u-av^q,\\
		b_i^qa^{-i+1}+b_i&=0.		
	\end{cases}
	\end{equation*}

Given $a^{q+1}=1$ and 	$a(u+av^q)=au^q+v,$ we derive  the identities  $-(av^q-a^{-1}v)^q=av^q-a^{-1}v$ and $(c-u-av^q)^q=c-u-av^q.$  According to Lemma \ref{lenu},  for any $v\in \gf_{q^2}$ with $au-v\neq0$, there are  $q-1$ possible choices for $u.$ 
Furthermore,  for any $c \in \gf_q^*,$ $u, v \in \gf_{q^2}$ with  $au-v\neq0$ and $
u^q-u=av^q-a^{-1}v,$ the number of  choices for  $b_1$ is $q$, and for $i=2, 3 , \cdots q-1$, the number of choices of $b_i$ are also $q.$

Consequently, the number of permutation polynomials $f(x)$ satisfying the second condition is $q^{q+2}(q-1)^2.$ Similarly, in the first case, the number of permutation polynomials $f(x)$ is also $q^{q+2}(q-1)^2.$

However,  Theorem \ref{threlo} implies that there are $(q!)^q$ permutation polynomials over $\gf_q$ determined by the map $\varphi(x)=x^q+ax,$ which is significantly greater than $2q^{q+2}(q-1)^2.$  Therefore,  many permutation polynomials determined by $x^q+ax$ remain undiscovered. 
	\end{remark}

Next, we further study the properties of linearized polynomials.  
Considered as maps between finite fields, linearized polynomials are always taken as 
\begin{equation}\label{linearized}
	L(x)=\sum_{i=0}^n a_ix^{q^i} \in \gf_{q^n}[x]/(x^{q^n}-x)
\end{equation}
We denote by $\mathscr{L}(\gf_{q^n})$ the set of all linearized polynomials in the form \eqref{linearized}.

A well known result of Dickson indicates that $L(x)$ in the form \eqref{linearized} is a linearized permutation polynomial if and only if the associated Dickson matrix 
\begin{equation}
	D_L=\left(
	\begin{array}{cccc}
	a_0&a_1&\cdots&a_{n-1}\\
	a_{n-1}^q&a_0^q&\cdots&a_{n-2}^q\\
	\vdots&\vdots&&\vdots\\
	a_1^{q^{n-1}}&a_2^{q^{n-1}}&\cdots&a_0^{q^{n-1}}
	\end{array}\right)
	\end{equation}
is non-singular.

Wu \cite{wu2013linearized} characterized $\mathscr{L}(\gf_{q^n})$ by the Dickson matrix algebra approach and derived some relations between linearized polynomials and their associated Dickson matrices. 
\begin{theorem}\cite[Theorem 4.5]{wu2013linearized}
	Let $L(x)=\sum_{i=0}^{n-1}a_ix^{q^i} \in \mathscr{L}(\gf_{q^n}) $ be a linearized permutation polynomial and $D_L$ be its associated Dickson matrix. Assume $\bar{a}_i$ is the $(i, 0)-$the cofactor of $D_L$, $0 \leq i\leq n-1.$ Then ${\rm det}\,L=\sum_{i=0}^{n-1}a_{n-i}^{q^i}\bar{a}_i$ 
	and $$L^{-1}(x)=\frac{1}{{\rm det}\,L}\sum_{i=0}^{n-1}\bar{a}_ix^{q^i}=\left(\sum_{i=0}^{n-1}\bar{a}_ix^{q^i}\right) \circ \left(\frac{x}{{\rm det}\,L}\right).$$
\end{theorem}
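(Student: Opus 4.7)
The plan is to proceed via the well-known correspondence between linearized polynomials and Dickson matrices, turning the problem into elementary linear algebra. The central observation is that the assignment $L \mapsto D_L$ is multiplicative: if I set $L_1(x) = \sum_i a_i x^{q^i}$ and $L_2(x) = \sum_j b_j x^{q^j}$, then a direct expansion gives
$$(L_1 \circ L_2)(x) = \sum_{i,j} a_i b_j^{q^i} x^{q^{i+j}},$$
and comparing coefficient-by-coefficient against the matrix product $D_{L_1} D_{L_2}$ (using $(D_L)_{i,j} = a_{j-i}^{q^i}$ with indices mod $n$) yields $D_{L_1 \circ L_2} = D_{L_1} D_{L_2}$. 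The first row of $D_L$ recovers the coefficient sequence of $L$, so this identity lets me translate composition into matrix multiplication.

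Given this homomorphism, the determinant formula is immediate: the entries of the first column of $D_L$ are $a_0, a_{n-1}^q, a_{n-2}^{q^2}, \ldots, a_1^{q^{n-1}}$, i.e.\ $a_{n-i}^{q^i}$ (with the convention $a_n = a_0$), so cofactor expansion of $\det D_L$ along column 0 gives exactly $\det L = \sum_{i=0}^{n-1} a_{n-i}^{q^i}\bar a_i$. For the inverse formula, since $L$ permutes $\gf_{q^n}$, $D_L$ is invertible, and the multiplicativity forces $D_{L^{-1}} = D_L^{-1}$. Applying the adjugate formula $D_L^{-1} = (\det D_L)^{-1}\operatorname{adj}(D_L)$ and reading off the top row — whose $(0,i)$ entry is the $(i,0)$-cofactor $\bar a_i$ divided by $\det L$ — gives
$$L^{-1}(x) = \frac{1}{\det L}\sum_{i=0}^{n-1} \bar a_i\, x^{q^i}.$$

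To pass to the alternative form $L^{-1}(x) = \bigl(\sum_i \bar a_i x^{q^i}\bigr) \circ (x/\det L)$, I will verify that $\det L \in \gf_q$. The cleanest way is to apply the Frobenius $y \mapsto y^q$ entrywise to $D_L$: the $(i,j)$-entry becomes $a_{j-i}^{q^{i+1}}$, which equals the $(i+1, j+1)$-entry of $D_L$ (indices mod $n$). Thus entrywise Frobenius implements a simultaneous cyclic shift of rows and columns, which preserves the determinant, giving $(\det D_L)^q = \det D_L$ and hence $\det L \in \gf_q$. Once this is known, $(\det L)^{q^i} = \det L$ for every $i$, so pulling $1/\det L$ through the composition with $x^{q^i}$ produces the claimed equivalence.

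The main obstacle is the bookkeeping in the multiplicativity step $D_{L_1 \circ L_2} = D_{L_1} D_{L_2}$: one must match Frobenius exponents with row/column indices modulo $n$ carefully, and it is here that the precise shape of the Dickson matrix (not just its first row) is being used. Everything else — the cofactor expansion, Cramer's rule, and the Frobenius-invariance of $\det D_L$ — is routine once the homomorphism is in place.
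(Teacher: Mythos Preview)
Your argument is correct. You take the classical Dickson-matrix-algebra route: establish the multiplicative homomorphism $L \mapsto D_L$, deduce $D_{L^{-1}} = D_L^{-1}$, and read the coefficients of $L^{-1}$ from the first row of the adjugate. The paper, in the Remark immediately following the theorem, takes a closely related but slightly different path: rather than proving multiplicativity at the level of compositions, it writes the single vector identity
\[
\bigl(L(x),\, L(x)^q,\, \ldots,\, L(x)^{q^{n-1}}\bigr)^{T} \;=\; D_L\,\bigl(x,\, x^q,\, \ldots,\, x^{q^{n-1}}\bigr)^{T},
\]
inverts it via the adjugate to obtain $x = (\det L)^{-1}\sum_{i} \bar a_i\, L(x)^{q^i}$, and then invokes the local-method Lemma~\ref{ff-} (with $\psi_i(x)=x^{q^i}$) to read off $L^{-1}$. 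Your approach builds more machinery (the full ring homomorphism) and also treats the determinant formula and the fact $\det L \in \gf_q$ needed for the factorization $L^{-1}(x)=\bigl(\sum_i \bar a_i x^{q^i}\bigr)\circ(x/\det L)$; the paper's Remark addresses neither of these and re-derives only the inverse formula, phrased so as to illustrate its local-method theme rather than the Dickson algebra.
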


\begin{remark}
Indeed, if $L(x)$ is a permutation polynomial over $\gf_{q^n}, $ we can derive the compositional inverse of $L(x)$ by the local method. 
Since the associated Dickson matrix $D_L$ is non-singular and 
\begin{equation*}
	\left(
	\begin{array}{c}
		L(x)\\
			L(x)^q\\
			\vdots\\
				L(x)^{q^n-1}
	\end{array}
	\right)=D_L
	\left(
	\begin{array}{c}
		x\\
		x^q\\
		\vdots\\
	x^{q^n-1}
	\end{array}
	\right), 
\end{equation*}
 we have 
\begin{equation*}
D_L^{-1}	\left(
	\begin{array}{c}
		L(x)\\
		L(x)^q\\
		\vdots\\
		L(x)^{q^n-1}
	\end{array}
	\right)=
	\left(
	\begin{array}{c}
		x\\
		x^q\\
		\vdots\\
		x^{q^n-1}
	\end{array}
	\right), 
\end{equation*}
or \begin{equation*}
\frac{	D_L^{*}}{{\rm det}\, L}	\left(
	\begin{array}{c}
		L(x)\\
		L(x)^q\\
		\vdots\\
		L(x)^{q^n-1}
	\end{array}
	\right)=
	\left(
	\begin{array}{c}
		x\\
		x^q\\
		\vdots\\
		x^{q^n-1}
	\end{array}
	\right), 
\end{equation*}
where $D_L^{*}$ is the adjugate matrix of $D_L.$\\
Therefore, $$\frac{1}{{\rm det}\,L}\sum_{i=0}^{n-1}\bar{a}_iL(x)^{q^i}=x.$$
Hence,  it follows from Lemma \ref{ff-} that the compositional inverse of $L(x)$ is given by 
$$L^{-1}(x)=\frac{1}{{\rm det}\,L}\sum_{i=0}^{n-1}\bar{a}_ix^{q^i}.$$
\end{remark}
We  derive another  result concerning linearized polynomials using a local method. 
\begin{theorem}
	Let $\{\theta_1, \theta_2,\cdots, \theta_n\}$ be any given basis of $\gf_{q^n}$ over $\gf_{q}$, and let $$L(x)=\Tr(\theta_1x)\omega_1+\Tr(\theta_2x)\omega_2 +\cdots+\Tr(\theta_nx)\omega_n, \,\, \text{with}\,\, \omega_i\in\gf_{q^n}.$$
	Then $L(x)$ is a permutation polynomial over $\gf_{q^n}$ if and only if 
	$$D_1=\left|
	\begin{array}{cccc}
		\Tr(\theta_1\omega_1)&	\Tr(\theta_1\omega_2)&\cdots&	\Tr(\theta_1\omega_n)\\
		\Tr(\theta_2\omega_1)&	\Tr(\theta_2\omega_2)&\cdots&	\Tr(\theta_2\omega_n)\\
		\vdots&\vdots&&\vdots\\
		\Tr(\theta_n\omega_1)&	\Tr(\theta_n\omega_2)&\cdots&	\Tr(\theta_n\omega_n)\\
	\end{array}	
	\right|\neq0.$$
\end{theorem}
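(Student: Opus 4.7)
The plan is to reduce the claim to Lemma \ref{lemmalinearized} (linear-independence characterization of $\omega_i$'s) and Lemma \ref{lemma 3.51} (Vandermonde-type determinant criterion for bases), and then bridge the two by expressing $D_1$ as a product of two ``$q$-power'' matrices. Concretely, by Lemma \ref{lemmalinearized}, $L(x)$ is a permutation polynomial of $\gf_{q^n}$ if and only if $\{\omega_1,\omega_2,\ldots,\omega_n\}$ is a basis of $\gf_{q^n}$ over $\gf_q$, so it suffices to prove that $D_1\neq 0$ is equivalent to this basis condition.

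The key step is the matrix factorization. Using $\Tr(\theta_i\omega_j)=\sum_{k=0}^{n-1}\theta_i^{q^k}\omega_j^{q^k}$, I would write
\[
\bigl(\Tr(\theta_i\omega_j)\bigr)_{1\leq i,j\leq n}=\Theta\,\Omega^{T},
\]
where $\Theta=\bigl(\theta_i^{q^{k}}\bigr)_{1\leq i\leq n,\,0\leq k\leq n-1}$ and $\Omega=\bigl(\omega_j^{q^{k}}\bigr)_{1\leq j\leq n,\,0\leq k\leq n-1}$. Taking determinants, $D_1=\det(\Theta)\cdot\det(\Omega)$.

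Next I would invoke Lemma \ref{lemma 3.51} twice. Since $\{\theta_1,\ldots,\theta_n\}$ is given to be a basis of $\gf_{q^n}$ over $\gf_q$, the lemma gives $\det(\Theta)\neq 0$. Therefore $D_1\neq 0$ if and only if $\det(\Omega)\neq 0$, and applying Lemma \ref{lemma 3.51} in the opposite direction this is exactly the statement that $\omega_1,\ldots,\omega_n$ are linearly independent over $\gf_q$, i.e.\ form a basis of $\gf_{q^n}$ over $\gf_q$. Combining with the reduction in the first paragraph yields the desired equivalence.

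There is no substantial obstacle; the entire argument is built from results already recorded in the excerpt. The only point requiring a little care is writing the factorization $D_1=\det(\Theta)\det(\Omega)$ with the correct index conventions (the transpose is essential, since the rows of $\Theta$ and $\Omega$ are indexed by the basis element and the columns by the Frobenius exponent), after which Lemma \ref{lemma 3.51} and Lemma \ref{lemmalinearized} close the argument immediately.
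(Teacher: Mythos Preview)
Your argument is correct. The factorization $\bigl(\Tr(\theta_i\omega_j)\bigr)_{i,j}=\Theta\,\Omega^{T}$ is valid since $\Tr(\theta_i\omega_j)=\sum_{k=0}^{n-1}\theta_i^{q^k}\omega_j^{q^k}$, and the chain of equivalences via Lemma~\ref{lemma 3.51} and Lemma~\ref{lemmalinearized} is sound.

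However, the route is genuinely different from the paper's. The paper does \emph{not} invoke Lemma~\ref{lemmalinearized}; instead it applies the local method (Lemma~\ref{ff-}) directly. It composes $L$ with the maps $\psi_i(x)=\Tr(\theta_i x)$, obtaining $\Tr(\theta_i L(x))=\Tr(\eta_i x)$ with $\eta_i=\sum_{j=1}^n\Tr(\theta_i\omega_j)\theta_j$, and then argues that $L$ is a permutation polynomial iff $x$ can be recovered from the values $\Tr(\eta_i x)$, iff the $q$-power matrix $D_2$ of the $\eta_i$'s is nonsingular, iff the $\eta_i$'s form a basis (Lemma~\ref{lemma 3.51}), iff $D_1\neq 0$ (since the coordinate vectors of the $\eta_i$'s in the $\theta$-basis are precisely the rows of $D_1$). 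Your approach is shorter and more transparent as pure linear algebra, and it makes the symmetric roles of the $\theta_i$'s and $\omega_j$'s visible through the factorization $D_1=\det(\Theta)\det(\Omega)$. The paper's approach, on the other hand, is tailored to illustrate the local-criterion philosophy that drives the whole article: it exhibits explicitly the surjections $\psi_i$ and the recovery of $x$ from $\psi_i\circ L$, which is the template of Lemma~\ref{ff-}. Both proofs ultimately rest on Lemma~\ref{lemma 3.51}, but they reach it from opposite ends.
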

\begin{proof}
	Since $\Tr(x)$ is a function from $\gf_{q^n}$ to $\gf_q,$ we have 
	\begin{align*}
		\Tr(\theta_i x)\circ L(x)=&\, \Tr\left(\theta_i\Tr(\theta_1x)\omega_1+\theta_i\Tr(\theta_2x)\omega_2 +\cdots+\theta_i\Tr(\theta_nx)\omega_n\right)\\
		=&\, \sum_{j=1}^{n}\Tr(\theta_jx)\Tr(\theta_i\omega_j)\\
		=&\, \Tr\left(\sum_{j=1}^n\theta_j\Tr(\theta_i\omega_j)x\right).
	\end{align*}
	For simplicity, put $\eta_i=\sum_{j=1}^n\theta_j\Tr(\theta_i\omega_j).$  
	Then we have 
	\begin{equation*}
		\left(
		\begin{array}{c}
			\Tr(\theta_1L(x))\\
			\Tr(\theta_2L(x))\\
			\vdots\\
			\Tr(\theta_nL(x))\\
		\end{array}
		\right)=
		\left(	\begin{array}{c}
			\Tr(\eta_1x)\\
			\Tr(\eta_2x)\\
			\vdots\\
			\Tr(\eta_nx)\\
		\end{array}
		\right)=D_2\left(
		\begin{array}{c}
			x\\
			x^q\\
			\vdots\\
			x^{q^{n-1}}
		\end{array}
		\right),
	\end{equation*}
	where 
	\begin{equation*}
		D_2=	\left(
		\begin{array}{ccccc}
			\eta_1 & \eta_1^q & \eta_1^{q^2}&\cdots&\eta_1^{q^{n-1}} \\
			\eta_2 & \eta_2^q & \eta_2^{q^2}&\cdots&\eta_2^{q^{n-1}} \\
			\vdots&\vdots&\vdots&&\vdots\\
			\eta_n & \eta_n^q & \eta_n^{q^2}&\cdots&\eta_n^{q^{n-1}} \\
		\end{array}
		\right).
	\end{equation*}
	Therefore,  $x$ can be uniquely represented by $	\Tr(\theta_1L(x)),$ $ 
	\Tr(\theta_2L(x)), $ $
	\cdots, $ $
	\Tr(\theta_nL(x))$ if and only if the matrix $D_2$ is non-singular. According to  Lemma \ref{lemma 3.51}, the non-singularity of   $D_2$  is equivalent to $\{\eta_1, \eta_2,\cdots, \eta_n\}$  forming a basis of $\gf_{q^n}$ over $\gf_q.$  Since $\{\theta_1, \theta_2,\cdots, \theta_n\}$ is a   basis of $\gf_{q^n}$ over $\gf_{q},$ 
	$\{\eta_1, \eta_2,\cdots, \eta_n\}$  forms a basis of $\gf_{q^n}$ over $\gf_{q}$ if only and if  $D_1\neq0.$
	
	From Lemma \ref{ff-},   it follows  that $L(x)$ permutes $\gf_{q^n}$ if and only if $D_1\neq0.$ This establishes the desired result. 
\end{proof}

\section{Local permutation polynomials with respect to special  maps over finite fields }
In this section, we investigate the properties of permutation polynomial over finite fields. 

Now, we provide an example of permutation polynomials over finite fields. 
\begin{example} \cite{park2001permutations,wang2007cyclotomic,zieve2009}
	Pick $h(x) \in \gf_q[x]$  and integers $r, s >0$ such that $s \mid (q-1).$ Then $f(x)=x^rh(x^s)$ permutes $\gf_q$ if and only if \\	
	(i) $\gcd(r, s)=1$ and; \\	
	(ii) $x^rh(x)^s$ permutes the set of $(q-1)/s$-th roots of unity in $\gf_q^*.$
	
		Let $\varphi(x)=x^s.$ For  any $u \in {\rm Im}(\varphi(x))$, the condition that $f(x)=x^rh(x^s)$ is injective on $\varphi^{-1}(u)$ is equivalent to $\gcd(r, s)=1.$   By Corollary \ref{colene}, 
	the condition  $\gcd(r, s)=1$ is necessary when determining whether $f(x)=x^rh(x^s)$ is a permutation polynomial over $\gf_q$  or not.  
	
	We see  that the polynomial $f(x)=x^rh(x^s)$ over $\gf_q$ with $\gcd(r, s)=1$ is a permutation polynomial 
	over $\gf_q$  if and only if  $x^s \circ \left(x^rh(x^s)\right)$ 
	is a surjection from $\gf_q$ to ${\rm Im}(x^s)$.
		$$\xymatrix{
		\gf_q \ar[rr]^{x^rh(x^s)}\ar[dr]_{x^s \circ  \left(x^rh(x^s)\right)}& & \gf_q\ar[dl]^{x^s}\\
		&\mu_{(q-1)/s}
	}$$

The necessity of the condition is already noted. 	
	Conversely,   if $x^s \circ \left(x^rh(x^s)\right)$ 
is a surjection from $\gf_q$ to ${\rm Im}(x^s)$, let $\mu_{(q-1)/s}$  denote the set of $(q-1)/s$-th roots of unity in $\gf_q^*.$ 
Suppose, on the contrary,  if $f(x)$ is not a permutation polynomial over $\gf_q$, then $x^rh(x)^s$ does not permute $\mu_{(q-1)/s},$ and so there exists $y \in  \mu_{(q-1)/s}$ such that for any $x_1 \in \mu_{(q-1)/s},$ $y \neq x_1^rh(x_1)^s.$ This implies that for any $x\in \gf_q$ such that  $x^s\in \mu_{(q-1)/s},$ we have $y \neq (x^s)^rh(x^s)^s, $ Hence,  $x^s \circ \left(x^rh(x^s)\right)$ 
is not a surjection from $\gf_q$ to ${\rm Im}(x^s),$ contrary to assumption.

	\end{example}

Next, we aim to demonstrate  that every linearized  polynomial $L(x)$ possesses a  similar property. Specifically, for certain mappings  $\psi_i(x)$ ($i=1, 2, \cdots, t$) where   the compositions $\psi_i(x)\circ f(x)$ are  surjective from $\gf_{q^n}$ to ${\rm Im}(\psi_i(x))$, it follows that  $L(x)$ is a permutation polynomial.  We begin with the following lemma. 
\begin{lemma}\label{lebasis}
Let $\omega_1, \omega_2, \cdots, \omega_n$ be $n$ elements of $\gf_{q^n}$ over $\gf_q. $ 
Then for any $u \in \gf_{q^n}^*$, $\Tr(u\omega_i)\neq 0$ if and only if $\{\omega_1, \omega_2, \cdots, \omega_n\}$ is a basis of $\gf_{q^n}$ over $\gf_q. $
\end{lemma}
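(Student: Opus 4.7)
The plan is to reinterpret the statement as a bijectivity question about the $\gf_q$-linear map
\[ \Phi: \gf_{q^n} \to \gf_q^n, \qquad u \mapsto (\Tr(u\omega_1), \Tr(u\omega_2), \ldots, \Tr(u\omega_n)). \]
The hypothesis that some $\Tr(u\omega_i) \neq 0$ for every $u \in \gf_{q^n}^*$ is exactly the statement $\ker \Phi = \{0\}$, and since the source and target both have $\gf_q$-dimension $n$, injectivity of $\Phi$ is equivalent to bijectivity. So the lemma reduces to the claim: $\Phi$ is injective if and only if $\{\omega_1, \ldots, \omega_n\}$ is a $\gf_q$-basis of $\gf_{q^n}$.

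For the ``if'' direction I would appeal to the nondegeneracy of the trace bilinear form $(a,b) \mapsto \Tr(ab)$ on $\gf_{q^n}$. If $\{\omega_1,\ldots,\omega_n\}$ is a basis, every $v \in \gf_{q^n}$ expands uniquely as $v = \sum_i c_i \omega_i$ with $c_i \in \gf_q$, and $\gf_q$-linearity of $\Tr$ gives $\Tr(uv) = \sum_i c_i \Tr(u\omega_i)$. Thus $\Phi(u) = 0$ forces $\Tr(uv) = 0$ for every $v \in \gf_{q^n}$, and nondegeneracy yields $u = 0$. For the converse, if the $\omega_i$'s fail to span, their $\gf_q$-span $V$ is a proper subspace, so the orthogonal complement $V^\perp = \{u : \Tr(uv) = 0 \text{ for all } v \in V\}$ has $\gf_q$-dimension $n - \dim V > 0$; any nonzero $u \in V^\perp$ satisfies $\Tr(u\omega_i) = 0$ for every $i$, contradicting the hypothesis.

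A second route, closer to the matrix viewpoint of the excerpt, is to write $\Phi(u) = M \cdot (u, u^q, \ldots, u^{q^{n-1}})^T$ with $M = (\omega_i^{q^{j-1}})_{1\leq i,j\leq n}$; Lemma \ref{lemma 3.51} then identifies $\det M \neq 0$ with the $\gf_q$-linear independence of $\{\omega_1, \ldots, \omega_n\}$, and one checks that the map $u \mapsto (u, u^q, \ldots, u^{q^{n-1}})^T$ is $\gf_q$-injective so that $\Phi(u) = 0$ forces $u = 0$ exactly when $\det M \neq 0$. I do not foresee any real obstacle; the only care needed is getting the direction of the trace-form nondegeneracy right (or, in the matrix formulation, justifying the injectivity of the Frobenius column map), after which both implications are immediate.
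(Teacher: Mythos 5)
Your main argument is correct, and it reads the (ambiguously worded) statement the right way: as asserting that for every $u\in\gf_{q^n}^*$ the vector $(\Tr(u\omega_1),\dots,\Tr(u\omega_n))$ is not identically zero, i.e.\ that the $\gf_q$-linear map $\Phi(u)=(\Tr(u\omega_1),\dots,\Tr(u\omega_n))$ has trivial kernel — which is exactly how the lemma is used in Theorem \ref{linearizedlocal}. Your route differs from the paper's in the tools, though both ultimately rest on trace duality. The paper proves the two directions with two different devices: for ``kernel trivial $\Rightarrow$ basis'' it extends a maximal independent subset of the $\omega_i$ to a basis and exhibits an explicit annihilator $\alpha_n$ from the dual basis; for ``basis $\Rightarrow$ kernel trivial'' it writes $\Phi(u)$ via the Moore matrix $(\omega_i^{q^{j-1}})$ and invokes Lemma \ref{lemma 3.51}. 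You instead give a uniform argument from nondegeneracy of the trace form: expansion of an arbitrary $v$ in the basis for one direction, and the dimension formula $\dim V^{\perp}=n-\dim V$ for the span $V$ in the other. This is cleaner and avoids Lemma \ref{lemma 3.51} entirely; the nondegeneracy you need is available from the paper's Lemma \ref{lemma2.24} (distinct $\beta$ give distinct maps $L_\beta$, so $L_u\neq 0$ for $u\neq 0$), so the appeal is legitimate within the paper's toolkit. One caution on your secondary ``matrix'' route: injectivity of $u\mapsto(u,u^q,\dots,u^{q^{n-1}})$ gives only that $\det M\neq 0$ implies $\Phi$ injective; to get the converse you cannot simply intersect $\ker M$ with the Frobenius image — you should instead note that a dependency $\sum_i c_i\omega_i=0$ with $c_i\in\gf_q$ not all zero forces the image of $\Phi$ into a proper subspace of $\gf_q^n$, so $\Phi$ is not surjective and hence, by dimension count, not injective. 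Since your primary argument is complete, this is only a remark on the sketched alternative.
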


\begin{proof}
	For any $u \in \gf_{q^n}^*$, it holds that $\Tr(u\omega_i)\neq 0.$ 
	We proceed to demonstrate  that $\{\omega_1, \omega_2, \cdots, \omega_n\}$ forms a basis of $\gf_{q^n}$ over $\gf_q. $ 
	Indeed, suppose otherwise. Then the $n$ elements $\omega_1, \omega_2, \cdots, \omega_n$ are linearly dependence over $\gf_q.$ We can assume, without lose of generality, that $\{\omega_1, \omega_2, \cdots, \omega_t\}$ is maximal independent subset of $\{\omega_1, \omega_2, \cdots, \omega_n\}$  and 
	\begin{equation}\label{lemmabasis}
	\omega_{t+1}=c_1\omega_1+c_2\omega_2+\cdots+c_t\omega_t, \,\, \text{with  $c_i \in \gf_q$ not all being $0$ }.
	\end{equation}
	If $\{\omega_1,  \cdots, \omega_t, \omega_{t+1}', \cdots, \omega_{n}'\}$
	is a basis of $\gf_{q^n}$ over $\gf_q$ extended by 
	$\{\omega_1, \cdots, \omega_t\},$
	then there exists its corresponding dual basis $\{\alpha_1, \alpha_2,\cdots,  \alpha_n\}.$
	So, by \eqref{lemmabasis},
	\begin{align*}
	\Tr(\alpha_n \omega_{t+1})=&\,\Tr\left(\alpha_n (c_1\omega_1+c_2\omega_2+\cdots+c_t\omega_t)\right)\\
	=&\,\sum_{i=1}^tc_i\Tr(\alpha_n\omega_i)\\
	=&\,0,
		\end{align*}
	which is a contradiction.

Conversely, if $\{\omega_1, \omega_2, \cdots, \omega_n\}$ is a basis of $\gf_{q^n}$ over $\gf_q, $ then by Lemma \ref{lemma 3.51}, the matrix 
	\begin{equation*}
	\left(
		\begin{array}{ccccc}
				\omega_1 & \omega_1^q & \omega_1^{q^2}&\cdots&\omega_1^{q^{n-1}} \\
				\omega_2 & \omega_2^q & \omega_2^{q^2}&\cdots&\omega_2^{q^{n-1}} \\
				\vdots&\vdots&\vdots&&\vdots\\
				\omega_n & \omega_n^q & \omega_n^{q^2}&\cdots&\omega_n^{q^{n-1}} \\
			\end{array}
		\right)
		\end{equation*} is non-singular. This implies that  the system of linear equations 
\begin{equation*}
	\left(
	\begin{array}{c}
		\Tr(\omega_1x)\\
			\Tr(\omega_2x)\\
		\vdots\\
			\Tr(\omega_nx)\\
		\end{array}	
	\right)=	
	\left(
	\begin{array}{ccccc}
			\omega_1 & \omega_1^q & \omega_1^{q^2}&\cdots&\omega_1^{q^{n-1}} \\
			\omega_2 & \omega_2^q & \omega_2^{q^2}&\cdots&\omega_2^{q^{n-1}} \\
			\vdots&\vdots&\vdots&&\vdots\\
			\omega_n & \omega_n^q & \omega_n^{q^2}&\cdots&\omega_n^{q^{n-1}} \\
		\end{array}
	\right)\left(
	\begin{array}{c}
			x\\
			x^q\\
			\vdots\\
			x^{q^{n-1}}
		\end{array}
	\right)=\left(
	\begin{array}{c}
			0\\
			0\\
			\vdots\\
			0
		\end{array}
	\right)
\end{equation*}
has no non-zero solution over $\gf_{q^n}^{*n}$. This indicates  that for any $u \in \gf_{q^n}^*,$ $\Tr(u\omega_i)\neq0.$
 We are done.  
	\end{proof}

\begin{theorem}\label{linearizedlocal}
	Let $\{\theta_1, \theta_2,\cdots, \theta_n\}$ be any given basis of $\gf_{q^n}$ over $\gf_{q}$, and let $$L(x)=\Tr(\theta_1x)\omega_1+\Tr(\theta_2x)\omega_2 +\cdots+\Tr(\theta_nx)\omega_n, \,\, \text{with}\,\, \omega_i\in\gf_{q^n}.$$
Then $L(x) $ is a permutation polynomial over $\gf_{q^n}$ if and only if  for any $u\in \gf_{q^n}^*$, $\Tr(u x) \circ L(x) \not\equiv 0\pmod{x^{q^n}-x}.$
\end{theorem}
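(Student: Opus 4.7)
The plan is to reduce the claim to the two lemmas already at our disposal, Lemma \ref{lemmalinearized} (which characterizes linearized permutation polynomials by the basis property of $\{\omega_1,\ldots,\omega_n\}$) and Lemma \ref{lebasis} (which characterizes the same basis property via non-vanishing of traces). The bridge between these and the hypothesis of the theorem is a direct computation of $\Tr(ux)\circ L(x)$ using the $\gf_q$-linearity of the trace.

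First I would compute, for a fixed $u\in\gf_{q^n}$, the composition
\begin{equation*}
\Tr(ux)\circ L(x)=\Tr\!\left(u\sum_{i=1}^{n}\Tr(\theta_i x)\omega_i\right)=\sum_{i=1}^{n}\Tr(\theta_i x)\Tr(u\omega_i),
\end{equation*}
which is legitimate because $\Tr(\theta_i x)\in\gf_q$. Pulling the $\gf_q$-scalars $\Tr(u\omega_i)$ back inside the trace, this equals $\Tr(\eta_u x)$ where $\eta_u:=\sum_{i=1}^{n}\theta_i\Tr(u\omega_i)\in\gf_{q^n}$. Reducing modulo $x^{q^n}-x$, the condition $\Tr(ux)\circ L(x)\equiv 0$ is equivalent to $\eta_u=0$, and since $\{\theta_1,\ldots,\theta_n\}$ is a $\gf_q$-basis of $\gf_{q^n}$, this in turn forces $\Tr(u\omega_i)=0$ for every $i=1,\ldots,n$.

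Next I would chain the equivalences. The hypothesis ``for every $u\in\gf_{q^n}^{*}$, $\Tr(ux)\circ L(x)\not\equiv 0\pmod{x^{q^n}-x}$'' is by the previous paragraph exactly the statement that for every $u\in\gf_{q^n}^{*}$ at least one of $\Tr(u\omega_1),\ldots,\Tr(u\omega_n)$ is nonzero. By Lemma \ref{lebasis} this is equivalent to $\{\omega_1,\ldots,\omega_n\}$ being a basis of $\gf_{q^n}$ over $\gf_q$, and by Lemma \ref{lemmalinearized} this is equivalent to $L(x)$ being a permutation polynomial over $\gf_{q^n}$.

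There is no real obstacle; the only subtlety to watch for is the reduction step $\Tr(\eta_u x)\equiv 0\pmod{x^{q^n}-x}\iff\eta_u=0$, which uses that the map $\beta\mapsto\Tr(\beta x)$ from $\gf_{q^n}$ into the space of $\gf_q$-linear maps $\gf_{q^n}\to\gf_q$ is injective — precisely the content of Lemma \ref{lemma2.24} already recorded in Section 2 — so the computation carries through cleanly and the two applications of the previous lemmas finish the argument.
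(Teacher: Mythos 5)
Your proposal is correct and follows essentially the same route as the paper: the same computation rewriting $\Tr(ux)\circ L(x)$ as $\Tr\!\left(\left(\sum_{i=1}^{n}\Tr(u\omega_i)\theta_i\right)x\right)$, followed by the same two reductions via Lemma \ref{lebasis} and Lemma \ref{lemmalinearized}. Your explicit appeal to Lemma \ref{lemma2.24} for the step $\Tr(\eta_u x)\equiv 0 \iff \eta_u=0$ is a minor clarification the paper leaves implicit, but otherwise the arguments coincide.
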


\begin{proof}For any $u\in \gf_{q^n}^*$, we have 	
	\begin{align*}
		\Tr(ux)\circ L(x)=\Tr(u L(x))&=\Tr\left(u\sum_{i=1}^n\omega_i\Tr(\theta_ix)\right)\\
		&=\sum_{i=1}^n\Tr(\theta_ix)\Tr(\omega_iu)\\
		&=\sum_{i=1}^n\Tr\left(\theta_i\Tr(\omega_iu)x\right)\\
		&=\Tr\left(\left(\sum_{i=1}^n\Tr(\omega_iu)\theta_i\right)x\right).
	\end{align*}
	This yields that  $\Tr(ux)\circ L(x)\not\equiv 0\pmod{x^{q^n}-x}$ if and only if $\sum_{i=1}^n\Tr(\omega_iu)\theta_i\ne0$. Since $\{\theta_1, \theta_2,\cdots, \theta_n\}$ is a basis of $\gf_{q^n}$ over $\gf_{q}$, $\sum_{i=1}^n\Tr(\omega_iu)\theta_i\ne0$ if and only if $\Tr(\omega_iu)\ne0$ for some $i\in\{1, \dots, n\}$, i.e., there does not exist  $u\in \gf_{q^n}^*$ such that 
	$$\Tr(\omega_iu)=0, \quad i=1, 2, \dots, n,$$
	which implies that $\sum_{i=1}^n\Tr(\omega_iu)\theta_i\ne0$ if and only if $\{\omega_1, \ldots, \omega_n\}$ is a basis of $\gf_{q^n}$ over $\gf_{q}$ by Lemma \ref{lebasis}, and the latter condition is equivalent to that  $L(x)$ is a permutation polynomial over $\gf_{q^n}$ by Lemma 2.5. 
	This completes the proof. 
\end{proof}
In Theorem \ref{linearizedlocal}, we employ all linear transformations from $\gf_{q^n}$ to $\gf_q,$ as described in Lemma \ref{lemma2.24},  to establish that $L(x)$ is a permutation polynomial over $\gf_{q^n}.$
Now, let us consider  an example where, despite finding maps $\psi_i(x)$ such that the compositions $\psi_i(x)\circ f(x)$ are surjections from $\gf_{q^n}$ to ${\rm Im}(\psi_i(x))$, the polynomial $f(x)$ dose  not permute $\gf_{q^n}.$
\begin{example}\label{exline}
	Let $\{\theta_1, \theta_2,\cdots, \theta_n\}$ and $\{v_1, v_2, \cdots , v_n\}$ be any given basis of $\gf_{q^n}$ over $\gf_{q}$. Assume that $$L(x)=\omega\sum_{i=1}^n\Tr(\theta_ix),$$
	where $\omega=a_1v_1+a_2v_2+\cdots+a_nv_n$ with $a_j\in \gf_q^*,$ $j=1, 2, \cdots, n.$
	Let $\{e_1, e_2, \cdots e_n\}$ be the dual basis of $\{v_1, v_2, \cdots , v_n\}.$ 
	We have 
	\begin{align}\label{eqline1}
		\Tr(e_jx) \circ L(x)=&\, \Tr\left(e_j\omega\sum_{i=1}^n\Tr(\theta_ix)\right)\nonumber\\
		=&\, \Tr(e_j\omega)\sum_{i=1}^n\Tr(\theta_ix)\nonumber\\
		=&\, \Tr\left(\sum_{i=1}^n\theta_i\Tr(e_j\omega)x\right)\nonumber\\
		=&\, \Tr\left(\sum_{i=1}^n\theta_ia_jx\right).
	\end{align}
	Since $\{\theta_1, \theta_2,\cdots, \theta_n\}$ is a basis of $\gf_{q^n}$ over $\gf_{q}$ and $a_j\in \gf_q^*,$ for $j=1, 2, \cdots, n,$ we have $\sum_{i=1}^n\theta_ia_j\neq0$ for all $j.$ Consequently, $\Tr(e_jx) \circ L(x)$
	are surjective from $\gf_{q^n}$  to $\gf_q$ as indicated by Eq. \eqref{eqline1}.
	Moreover,  every element in $\gf_{q^n}$ can be uniquely represented by $\Tr(e_1x),$ $ 
	\Tr(e_2x), $ $
	\cdots, $ $
	\Tr(e_nx).$ 
	
	However, despite these properties,  $L(x)$ is not a permutation polynomial over $\gf_{q^n},$ as shown in  Lemma \ref{lemmalinearized},  and the image of $L(x)$ forms a vector space over $\gf_q$ with dimension 1.
\end{example}
Example \ref{exline} demonstrates that    the  surjections  $\psi_{i}(x)$ such that the compositions $\psi_i(x) \circ f(x)$ are surjective from $\gf_{q^n}$ to ${\rm Im}(\psi_i(x))$ must not be too limited when establishing that  the linearized polynomial is a permutation polynomial  over finite fields.  This leads to the following question. 

\begin{question}
	What is the minimum number of polynomials of the form $\Tr(ax)$ required to conclusively prove that a linearized polynomial is a permutation polynomial?
\end{question}

Next, we will show that every permutation polynomial  has such property.
\begin{theorem}
	Let $A$ and $S_i$ $(i=1, 2, \cdots, t)$ be finite sets,  and let $\psi_i(x): A\rightarrow S_i$ be surjective maps and $f(x): A \rightarrow A $  be a map.
	Suppose that  $f(x)$ is a permutation polynomial if and only if 
	$\varphi_i(x)=\psi_i(x) \circ f(x)$ $(i=1, 2, \cdots t)$ are surjective from $A$ to $S_i$. Then for any permutation polynomial  $g(x): A \rightarrow  A,$ $g(x)\circ f(x)$ is a  permutation polynomial if and only if for $i=1, 2, \cdots, t,$  $\psi_i(x) \circ g^{-1}(x)$ are surjections from $A$ to $S_i.$
\end{theorem}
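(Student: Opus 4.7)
My plan is to use bijectivity of $g$ to transport the hypothesis about $f$ into a statement about $g\circ f$. The algebraic hinge of the argument is the identity
$$(\psi_i\circ g^{-1})\circ(g\circ f) \;=\; \psi_i\circ f \;=\; \varphi_i, \qquad i=1,2,\ldots,t,$$
which follows immediately from associativity and $g^{-1}\circ g=\mathrm{id}_A$. This identity makes precise the intuition that the "detecting data" for the pair $(g\circ f,\{\psi_i\circ g^{-1}\})$ coincides with the original detecting data $\varphi_i$ for $(f,\{\psi_i\})$, so the local criterion for the new pair essentially reduces to the hypothesis already available for the old pair.

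For the forward direction, I would suppose $g\circ f$ is a permutation polynomial. Since $g$ is a PP, the map $f=g^{-1}\circ(g\circ f)$ is a composition of bijections, hence itself a PP; applying the hypothesis then forces each $\varphi_i=\psi_i\circ f$ to be a surjection from $A$ onto $S_i$. To verify that each $\psi_i\circ g^{-1}$ is surjective onto $S_i$, given $s\in S_i$ I would pick $y\in A$ with $\psi_i(f(y))=s$ and set $x:=g(f(y))$; then $\psi_i(g^{-1}(x))=\psi_i(f(y))=s$, which is what is required.

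For the converse, assume each $\psi_i\circ g^{-1}$ is a surjection from $A$ onto $S_i$. Since $g$ is a PP, $g\circ f$ is a PP if and only if $f$ is a PP, and by the hypothesis the latter is equivalent to the surjectivity of each $\varphi_i$. Rewriting $\varphi_i=(\psi_i\circ g^{-1})\circ(g\circ f)$ via the identity above and using the bijectivity of $g$ together with the assumed surjectivity of $\psi_i\circ g^{-1}$, I would conclude that $\varphi_i$ is surjective, and then invoke the hypothesis to obtain $f$ a PP and hence $g\circ f$ a PP. The main obstacle lies precisely in this converse step: the bare surjectivity of $\psi_i\circ g^{-1}\colon A\to S_i$ is close to automatic, because $g^{-1}$ is a bijection of $A$ and $\psi_i$ is surjective, so the genuine content must be read as the analog of the local-criterion condition for $g\circ f$; the identity above is what makes this reading rigorous and lets the hypothesis close the loop.
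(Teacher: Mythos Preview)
Your approach is the same as the paper's: both hinge on the identity $(\psi_i\circ g^{-1})\circ(g\circ f)=\psi_i\circ f=\varphi_i$, and your forward direction is essentially what the paper does (the paper's first paragraph just observes that $\psi_i\circ g^{-1}$ is surjective because $\psi_i$ is and $g^{-1}$ is a bijection).

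You are right to flag the converse as the real issue. As literally stated, the hypothesis ``$\psi_i\circ g^{-1}$ surjective'' is automatic (surjection composed with a bijection), so it cannot by itself force $g\circ f$ to be a permutation; your line ``using the bijectivity of $g$ together with the assumed surjectivity of $\psi_i\circ g^{-1}$, I would conclude that $\varphi_i$ is surjective'' does not go through, and you correctly identify this as the obstacle rather than pushing a bogus argument. The paper's own converse does \emph{not} assume merely that $\psi_i\circ g^{-1}$ is surjective: it explicitly assumes that the full compositions $(\psi_i\circ g^{-1})\circ(g\circ f)=\varphi_i$ are surjective, then invokes the hypothesis on $f$ to get $f$ a permutation and hence $g\circ f$ a permutation. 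In other words, the paper is proving that $g\circ f$ is a local permutation polynomial with respect to the maps $\psi_i\circ g^{-1}$ in the sense of the definition that follows, and the phrase ``$\psi_i(x)\circ g^{-1}(x)$ are surjections'' in the theorem should be read with an implicit ``composed with $g\circ f$''. Under that intended reading, your identity makes both directions immediate and your proposal matches the paper's proof.
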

\begin{proof}
	If $g(x)$ is a permutation polynomial, then $\psi_i(x) \circ g^{-1}(x): A \rightarrow S_i $ are surjections by assumption, so $\psi_i(x) \circ g^{-1}(x) \circ g(x) \circ f(x)= \varphi_i(x)$ are surjective from $A$ to $S_i.$
	
	Conversely, assume that  $\varphi_i(x)=\psi_i(x) \circ g^{-1}(x) \circ g(x) \circ f(x): \gf_q \rightarrow S_i  $ are surjective.  Note that $\varphi_i(x)=\psi_i(x) \circ f(x)$ and $f(x)$ is a  permutation polynomial  if and only if $\varphi_i(x)$ are surjective from $A$ to $S_i.$ So, $f(x)$ is a permutation polynomial. Hence, $g(x) \circ f(x)$ is a permutation polynomial,  which is desired result.
\end{proof}

Based on the above results, we introduce the concept of local permutation polynomials with respect to special mappings over finite fields. 

\begin{definition} \label{delo}
	For a  prime power $q,$ let  $f(x)$ be a polynomial over $\gf_q$.  Assume  that $S_i$ are nonempty finite sets of $\gf_q$ with $| S_i | \leq q/2$ and  $\psi_i(x)$ are surjections from $\gf_q$ to $S_i$ $(i=1, 2, \cdots , t).$ If $f(x)$ is a permutation polynomial over $\gf_q$ if and only if  $\varphi_i(x)=\psi_i(x) \circ f(x)$ $(i=1, 2, \cdots , t)$ are surjective from $\gf_q$ to $S_i$,  then the polynomial $f(x)$ is called the local permutation polynomial over $\gf_q$ with respect to $\psi_1(x), \psi_2(x)  , \cdots, \psi_t(x).$ 	
	$$\xymatrix{
		A \ar[rr]^{f(x)}\ar[dr]_{\varphi_i(x)}& & A\ar[dl]^{\psi_i(x)}\\
		&S
	}$$
	
\end{definition}

If we aim  to  prove that a polynomial is permutation, we only need to find  mappings  $\psi_i(x)$$(i=1, 2, \cdots , t)$ such that mappings  $\psi_i(x)$ ($i=1, 2, \cdots, t$) such that  the compositions $\psi_i(x)\circ f(x)$ are  surjective from $\gf_{q^n}$ to ${\rm Im}(\psi_i(x)).$

%
%
%


{\bf Acknowledgments}
\\

 P. Yuan was supported by the National Natural Science Foundation of China (Grant No. 12171163), Guangdong Basic and Applied Basic Research Foundation (Grant No. 2024A1515010589).
%

{\bf References}

\end{document}